\providecommand{\U}[1]{\protect\rule{.1in}{.1in}}
\newtheorem{theorem}{Theorem}
\newtheorem{lemma}{Lemma}
\newenvironment{proof}[1][Proof]{\noindent\textbf{#1.} }{\ \rule{0.5em}{0.5em}}
\begin{document}

\title{Holomorphic plane fields with many invariant hypersurfaces.}
\author{L. C\^{a}mara and B. Sc\'{a}rdua}
\maketitle

\begin{abstract}
In this paper we show that a (non necessarily integrable) holomorphic plane
field on a compact complex manfold $M$ having an infinite number of invariant
hypersurfaces must admit a meromorphic first integral $F:M\longrightarrow
\mathbb{C}^{p}$. In particular, it is integrable.

\end{abstract}

\section{Holomorphic plane fields}

\paragraph{Invariant hyperplanes for plane fields.}

In \cite{Ghys} E. Ghys generalized a result by J.-P. Jouanolou
(\cite{Jouanolou}) showing that any holomorphic Pfaff equation on a compact
complex manifold has only a finite number of compact leaves unless all leaves
are compact. Both results are generalized versions of a classical result due
to G. Darboux (cf. \cite{Darboux},\cite{Jouanolou2}) for holomorphic
foliations in dimension two. Our main goal in this paper is to generalize this
statement to plane fields of greater codimensions. For this sake, we shall
need to revise and introduce some concepts.

Recall from de Medeiros work that a holomorphic plane field of codimension $p$
is locally given by an LDS $p$-form (\cite{deMed}, Proposition 1.2.1, p. 455),
i.e., a germ of holomorphic $p$-form $\omega$ locally decomposable off the
singular set $\operatorname*{Sing}(\omega)$ with $\operatorname*{codim}%
(\operatorname*{Sing}(\omega))\geq2$. Thus a holomorphic plane field of
codimension $p$ on a complex manifolds $M$ is given by the following data:

\begin{enumerate}
\item An open covering $\mathcal{U}=\{U_{\alpha}\}$ of $M$;

\item A collection of holomorphic LDS $p$-forms $\omega_{\alpha}\in
\Omega_{\mathcal{O}}^{p}(U_{\alpha})$ whose singular set $\operatorname*{Sing}%
(\omega)$ has codimension $\geq2$;

\item A collection of maps $g_{\beta\alpha}\in\mathcal{O}^{\ast}%
(U_{\alpha\beta})$ such that $\omega_{\beta}=g_{\beta\alpha}\omega_{\alpha}$
whenever $U_{\alpha\beta}=U_{\alpha}\cap U_{\beta}\neq\emptyset$.
\end{enumerate}

The Cech cocycle $g=(g_{\beta\alpha})\in H^{1}(\mathcal{U},\mathcal{O}^{\ast
})$ defines a line bundle $\mathcal{L}$ over $M$. Thus the collection
$(\omega_{\alpha})$ may be interpreted as a global holomorphic $p$-form
$\omega=(\omega_{\alpha})\in H^{0}(\mathcal{U},\Omega_{\mathcal{O}}^{p}%
\otimes\mathcal{L})$ defined over $M$ and assuming values on the line bundle
$\mathcal{L}$. The singular set $\operatorname*{Sing}(\mathcal{F}_{\omega})$
is supposed to be the union of the singular sets of $\omega_{\alpha}$ and has
codimension greater or equal to $2$.

Recall that the algebraic characterization of the LDS$\ p$-forms is given by
the equation $i_{v_{I}}\omega\wedge\omega=0$ for any $p$-vector field
$v_{I}=v_{1}\wedge\cdots\wedge v_{n}$, where $\{v_{1},\cdots,v_{p}\}$ is a
local frame. Further, the integrability of such plane fields may be
characterized by the additional condition $i_{v_{I}}\omega\wedge d\omega=0$
\ (\cite{deMed}, Proposition 1.2.2, p. 455).

A codimension $p$ plane field $\omega=0$ is said to admit a \emph{first
integral} if it is tangent to the the level sets of a holomorphic map
$f:M\longrightarrow\mathbb{C}^{p}$. In such case clearly this plane field is
integrable (in the previous sense) and its leaves are contained in the level
sets of $f$. Since in general a compact complex manifold has few holomorphic
functions, then we often discuss the meromorphic integrability of such
foliations. Clearly such object must be a map $f=(f_{1},\cdots,f_{p}%
):M\longrightarrow\mathbb{C}^{p}$, such that each $f_{j}$ is a meromorphic
function, whose level sets are tangent to the plane field $\omega=0$. In this
case, all the solutions are obviously complete intersections.

Recall that in the codimension $1$ case an infinite number of compact
solutions leads to the existence of a (global) meromorphic first integral.
Thus it is natural to investigate under what conditions on its invariant
hypersurfaces a holomorphic plane field of codimension $p$ admits a
meromorphic first integral.

Recall that a hypersurface (a codimension $1$ variety) is given by an open
covering $\mathcal{U}=\{U_{\alpha}\}$ of $M$ and complex analytic functions
$f_{\alpha}:U_{\alpha}\longrightarrow\mathbb{C}$ such that $f_{\beta}%
=g_{\beta\alpha}f_{\alpha}$ for some $(g_{\beta\alpha})\in H^{1}%
(\mathcal{U},\mathcal{O}^{\ast})$. This last condition means essentially that
$f_{\alpha}$ and $f_{\beta}$ vanish at the same place in $U_{\alpha}\cap
U_{\beta}$. A\ hypersurface $V=(f=0)$ is said to be \emph{invariant} by
$\omega$ if $\operatorname*{Ker}(\omega(p))\subset\operatorname*{Ker}df(p)$
for all $p\in V$. This coincides with the usual geometric definition in case
$\omega$ is integrable, i.e., the tangent space of the solutions are tangent
to $V$.

\begin{theorem}
\label{hol. plane field 1}Let $M$ be a compact complex manifold and $\omega\in
H^{0}(M,\Omega^{p}\otimes\mathcal{L})$ be induced by LDS $p$-forms. Then the
$p$-codimension plane field $(\mathcal{F}:\omega=0)$ admits a meromorphic
first integral iff $\mathcal{F}$ admits  infinitely many invariant hypersurfaces.
\end{theorem}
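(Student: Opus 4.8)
The plan is to prove the two implications separately; the forward one (first integral $\Rightarrow$ infinitely many invariant hypersurfaces) is elementary, while the converse is the substantial Darboux--Jouanolou--Ghys--type statement. Suppose first that $\mathcal{F}$ admits a meromorphic first integral $F=(f_1,\dots,f_p)\colon M\to\mathbb{C}^p$, and write each $f_j=g_j/h_j$ as a quotient of sections of a line bundle. I would consider the pencils $V_{j,c}=(g_j-c\,h_j=0)$, $c\in\mathbb{C}$. Since $\operatorname{Ker}(\omega)\subset\bigcap_j\operatorname{Ker}(df_j)\subset\operatorname{Ker}\bigl(d(f_j-c)\bigr)$ wherever $f_j=c$, every $V_{j,c}$ is invariant; as $c$ varies these are pairwise distinct, giving infinitely many invariant hypersurfaces. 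The only point to check is that a generic member of the pencil is reduced of codimension one, which holds because $f_j$ is nonconstant.

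For the converse I would run a Darboux-type argument adapted to the line-bundle setting. To each invariant hypersurface $V=(f=0)$, with $f$ a local section of a line bundle $\mathcal{N}$, the invariance $\operatorname{Ker}(\omega)\subset\operatorname{Ker}(df)$ along $V$ translates, via the LDS characterisation $i_{v_I}\omega\wedge\omega=0$, into $df\wedge\omega\equiv 0\pmod f$; hence there is a holomorphic \emph{cofactor}, a $(p+1)$-form $\Theta=\tfrac{df}{f}\wedge\omega$ which on overlaps satisfies $\Theta_\beta=g_{\beta\alpha}\bigl(\Theta_\alpha+d\log h_{\beta\alpha}\wedge\omega_\alpha\bigr)$, the correction term recording $c_1(\mathcal{N})$. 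The key observation is that for two invariant hypersurfaces $V_i=(f_i=0)$ and $V_j=(f_j=0)$ defining the \emph{same} line bundle (the same cocycle $(h_{\beta\alpha})$) the correction cancels, so that $d\log(f_i/f_j)\wedge\omega=\Theta_i-\Theta_j$ is a genuine global holomorphic section of $\Omega^{p+1}\otimes\mathcal{L}$; since $M$ is compact this space is finite dimensional.

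The heart of the proof is then a finiteness-and-pigeonhole step. Using compactness of $M$ (finite generation of the N\'eron--Severi group, together with the fact that a nonzero cofactor forces $H^0(M,\Omega^{p+1}\otimes\mathcal{L}\otimes\mathcal{N})\neq 0$, which bounds the admissible classes $\mathcal{N}$), I would first show that infinitely many of the invariant hypersurfaces represent a single class $\mathcal{N}\in\mathrm{Pic}(M)$. Restricting to such a family $\{f_i\}$ and sending $f_i\mapsto\Theta_i-\Theta_0$ into the finite-dimensional space $H^0(M,\Omega^{p+1}\otimes\mathcal{L})$, finitely many of these differences become $\mathbb{Q}$-linearly dependent, producing integers $\lambda_i$, not all zero, with $\sum_i\lambda_i\,d\log f_i\wedge\omega=0$. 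Thus $\Omega:=\sum_i\lambda_i\,d\log f_i$ is a closed meromorphic $1$-form with $\Omega\wedge\omega=0$, i.e. a logarithmic $1$-form lying in $\langle\omega_1,\dots,\omega_p\rangle$ and annihilating the plane field; it integrates to $\prod_i f_i^{\lambda_i}$, a meromorphic function (after clearing the rational exponents) that is constant on the leaves.

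Finally I would promote this single relation to a full first integral $M\to\mathbb{C}^p$: the closed annihilating forms $\Omega$ so produced lie in the rank-$p$ bundle $\langle\omega_1,\dots,\omega_p\rangle$, and repeating the pigeonhole argument on independent subfamilies yields $p$ of them that are pointwise linearly independent on a dense open set, whence functionally independent meromorphic functions $F_1,\dots,F_p$ whose level sets are tangent to $\omega=0$; assembling $F=(F_1,\dots,F_p)$ gives the desired first integral. I expect the main obstacle to be the uniform control of the line-bundle twists --- showing that the classes $\mathcal{N}_i=\mathcal{O}(V_i)$ cannot escape to infinity, so that infinitely many invariant hypersurfaces genuinely live in one class and the cofactors land in a single finite-dimensional vector space. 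A secondary difficulty is passing from one annihilating closed form to $p$ independent ones realising the full codimension, and verifying that the resulting product is single-valued meromorphic once the rational exponents are cleared.
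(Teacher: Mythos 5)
Your overall architecture (cofactors $d\log f\wedge\omega$, finite dimensionality of $H^{0}(M,\Omega^{p+1}\otimes\mathcal{L})$, a linear-dependence/pigeonhole step producing a closed meromorphic $1$-form annihilating $\omega$) is the right one, but the step you yourself flag as the main obstacle is a genuine gap, and it is exactly the point where the paper takes a different route. You propose to find infinitely many invariant hypersurfaces in a \emph{single} class of $\mathrm{Pic}(M)$ so that the correction terms $d\log h_{\beta\alpha}\wedge\omega_\alpha$ cancel pairwise. Finite generation of the N\'eron--Severi group does not give this: two hypersurfaces in the same NS class differ by an element of $\mathrm{Pic}^{0}(M)$, which on a general compact complex manifold is a positive-dimensional torus, so infinitely many invariant hypersurfaces may be pairwise non-linearly-equivalent (translates of a subtorus inside a complex torus, invariant for a linear foliation, already do this). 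The paper, following Ghys, avoids the pigeonhole on $\mathrm{Pic}$ entirely: it maps $\mathfrak{Div}(M)\otimes\mathbb{C}$ linearly into the finite-dimensional space $\check{H}^{1}(\mathcal{U},\mathcal{Z}_{\mathcal{O}}^{1})$ via $v\mapsto d\log g_{\beta\alpha}$, so that infinitely many hypersurfaces force an infinite-dimensional kernel of $\mathbb{C}$-linear combinations $v=\sum\lambda_j v_j$; for such $v$ the cocycle $\sum\lambda_j d\log g^{j}_{\beta\alpha}$ is a coboundary $\zeta_\beta-\zeta_\alpha$ of \emph{closed holomorphic} $1$-forms, and $\xi_\alpha=\sum\lambda_j d\log f^{j}_\alpha+\zeta_\alpha$ is then a global closed meromorphic $1$-form (Lemma \ref{meromorphic form}). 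That correction term $\zeta_\alpha$ is the idea your argument is missing.

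A second gap sits at the end. The coefficients $\lambda_i$ you obtain from linear dependence in a finite-dimensional \emph{complex} vector space are complex, not rational, so you cannot ``clear the rational exponents'': $\prod_i f_i^{\lambda_i}$ is in general multivalued, and a closed meromorphic $1$-form $\xi$ with $\xi\wedge\omega=0$ need not be $d$ of a meromorphic function. The paper never exponentiates: it produces $p$ independent closed meromorphic $1$-forms $\xi_{i_1},\dots,\xi_{i_p}$ annihilating $\omega$, forms the decomposable closed meromorphic $p$-forms $\xi_I=\xi_{i_1}\wedge\cdots\wedge\xi_{i_p}$, applies de Medeiros's division lemma to write $\omega=h_I\xi_I$, and extracts the first integrals as the single-valued meromorphic ratios $h_{IJ}$ with $\xi_I=h_{IJ}\xi_J$ (closedness of $\xi_I$ gives $dh_{IJ}\wedge\omega=0$). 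Your forward implication (level sets of a meromorphic first integral give infinitely many invariant hypersurfaces) is fine.
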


\paragraph{Chern classes and divisors.}

Recall that a divisor is a (formal) linear combination of varieties
$a_{1}f^{1}+\cdots+a_{k}f^{k}$ representing the variety $(f^{1})^{a_{1}}%
\cdots(f^{k})^{a_{k}}=0$, $a_{j}\in\mathbb{Z}$. This defines a homomorphism
$\mathfrak{Div}(M)\longrightarrow$ $\check{H}^{1}(M,\mathcal{O}^{\ast})$ from
the set of analytic divisors in $M$ to the set of line bundles on $M$. From
the exact sequence $0\longrightarrow\mathbb{Z}\hookrightarrow\mathcal{O}%
\overset{\exp}{\longrightarrow}\mathcal{O}^{\ast}\longrightarrow0$ one can
define the first Chern class $c_{1}(v)\in\check{H}^{2}(M,\mathbb{C}%
)\simeq\check{H}^{1}(\mathcal{U},\Omega_{\mathcal{O}}^{1})$ of a divisor
$v\in\mathfrak{Div}(M)$ as the first Chern class of $(g_{\beta\alpha}%
)\in\check{H}^{1}(\mathcal{U},\mathcal{O}^{\ast})$. Considering the de Rham
isomorphism $\check{H}^{2}(M,\mathbb{C})\simeq\check{H}^{1}(\mathcal{U}%
,\Omega_{\mathcal{O}}^{1})\simeq H^{2}(M,\mathbb{C})$ one can write the
representative of $c_{1}(v)$ in $\check{H}^{1}(\mathcal{U},\Omega
_{\mathcal{O}}^{1})$ as $d\log g_{\beta\alpha}$. Thus we obtain the
homomorphism  $\mathfrak{Div}(M)\longrightarrow\check{H}^{1}(\mathcal{U}%
,\mathcal{Z}_{\mathcal{O}}^{1})$ given by $v\mapsto c_{1}(v)=d\log
g_{\beta\alpha}$, where $(g_{\beta\alpha})$ is the line bundle determined by
the divisor $v$ and $\mathcal{Z}_{\mathcal{O}}^{1}$ denotes the sheaf of germs
of closed holomorphic $1$-forms. Now consider the $\mathbb{C}$-linear map
$\psi:\mathfrak{Div}(M)\otimes\mathbb{C}\longrightarrow\check{H}%
^{1}(\mathcal{U},\mathcal{Z}_{\mathcal{O}}^{1})$ and denote its Kernel by
$\mathfrak{Div}_{0}(M)$. Clearly each $v\in\mathfrak{Div}_{0}(M)$ is of the
form $v=\sum\lambda_{j}v_{j}$ where each $v_{j}:(f_{j}=0)$  is the closure of
an invariant hypersurface of $\mathcal{F}$. Note that the linear structure of
the map $\psi$ and the existence of infinitely many invariant hypersurfaces
ensure the existence of infinitely many invariant divisors in the kernel of
$\psi$, i.e., infinitely many \emph{flat} invariant divisors.

\paragraph{First integrals and $p$--divisors.}

Suppose $c_{1}(v)$ vanishes, then refining the covering if necessary one may
find $(\zeta_{\alpha})\in\check{H}^{0}(\mathcal{U},\mathcal{Z}_{\mathcal{O}%
}^{1})$ such that $c_{1}(v)=\zeta_{\beta}-\zeta_{\alpha}$. Since with
$f_{\alpha}^{j}=g_{\alpha\beta}^{j}\cdot f_{\beta}^{j}$, then $\sum_{j=1}%
^{n}\lambda_{j}d\log f_{\alpha}^{j}-\lambda_{j}d\log f_{\beta}^{j}%
=\zeta_{\beta}-\zeta_{\alpha}$. We have just proved that

\begin{lemma}
[\cite{Ghys}]\label{meromorphic form}For each $v=\sum\lambda_{j}v_{j}%
\in\mathfrak{Div}_{0}$, where $v_{j}$ is an invariant hypersurface of
$\mathcal{F}$ given by $v:(f_{j}=0)$, there is a global closed meromorphic
$1$-form $\xi=(\xi_{\alpha})\in\check{H}^{0}(\mathcal{U},\mathcal{Z}%
_{\mathcal{M}}^{1})$ given by $\xi_{\alpha}=\sum_{j=1}^{n}\lambda_{j}d\log
f_{\alpha}^{j}+\zeta_{\alpha}$.
\end{lemma}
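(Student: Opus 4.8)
The plan is to build the global form $\xi$ chart by chart and then verify that the local pieces agree on overlaps. Concretely, on each $U_{\alpha}$ I set $\xi_{\alpha}=\sum_{j}\lambda_{j}\,d\log f_{\alpha}^{j}+\zeta_{\alpha}$, where $(\zeta_{\alpha})$ is the $0$-cochain of closed holomorphic $1$-forms furnished by the vanishing of $c_{1}(v)$, and then I must check three things: that each $\xi_{\alpha}$ is meromorphic, that each $\xi_{\alpha}$ is closed, and that $\xi_{\alpha}=\xi_{\beta}$ on every $U_{\alpha\beta}$.

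First, meromorphicity and closedness are local and immediate. Since $d\log f_{\alpha}^{j}=df_{\alpha}^{j}/f_{\alpha}^{j}$ is a meromorphic $1$-form with at worst a simple pole along $(f_{\alpha}^{j}=0)$, the finite combination $\sum_{j}\lambda_{j}\,d\log f_{\alpha}^{j}$ is meromorphic, and adding the holomorphic form $\zeta_{\alpha}$ preserves this. Closedness follows because $d(d\log f_{\alpha}^{j})=0$ and $\zeta_{\alpha}\in\mathcal{Z}_{\mathcal{O}}^{1}(U_{\alpha})$ is closed by construction; hence $\xi_{\alpha}\in\mathcal{Z}_{\mathcal{M}}^{1}(U_{\alpha})$ for every $\alpha$.

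The heart of the argument is the compatibility on overlaps, and this is precisely where the hypothesis $v\in\mathfrak{Div}_{0}$ is used. Since $\psi(v)=0$, the cocycle representing $c_{1}(v)$ is a coboundary in $\mathcal{Z}_{\mathcal{O}}^{1}$, so after refining $\mathcal{U}$ if necessary one obtains $(\zeta_{\alpha})$ with $c_{1}(v)=\zeta_{\beta}-\zeta_{\alpha}$. On the other hand, the defining relations $f_{\alpha}^{j}=g_{\alpha\beta}^{j}f_{\beta}^{j}$ of the invariant hypersurfaces $v_{j}$ give $d\log f_{\alpha}^{j}-d\log f_{\beta}^{j}=d\log g_{\alpha\beta}^{j}$, and since $c_{1}(v)$ is represented by the cocycle $\sum_{j}\lambda_{j}\,d\log g_{\alpha\beta}^{j}$, we get $\sum_{j}\lambda_{j}(d\log f_{\alpha}^{j}-d\log f_{\beta}^{j})=\zeta_{\beta}-\zeta_{\alpha}$. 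Substituting this into $\xi_{\alpha}-\xi_{\beta}$ yields $(\zeta_{\beta}-\zeta_{\alpha})+(\zeta_{\alpha}-\zeta_{\beta})=0$, so the local forms glue to a single global section $\xi=(\xi_{\alpha})\in\check{H}^{0}(\mathcal{U},\mathcal{Z}_{\mathcal{M}}^{1})$.

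The main obstacle is not the gluing identity itself --- that is a one-line cocycle cancellation --- but rather guaranteeing that the holomorphic correction terms $\zeta_{\alpha}$ exist at all, i.e. that the cohomological obstruction genuinely vanishes. This is exactly what membership in $\mathfrak{Div}_{0}=\ker\psi$ provides, and the only technical care needed is the passage to a sufficiently fine refinement of $\mathcal{U}$ so that the class $c_{1}(v)$ is realized as an honest coboundary $\zeta_{\beta}-\zeta_{\alpha}$ of a $0$-cochain of closed holomorphic $1$-forms, rather than merely a cohomologically trivial class.
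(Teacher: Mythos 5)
Your proof is correct and follows essentially the same route as the paper: the authors likewise define $\xi_{\alpha}=\sum_{j}\lambda_{j}\,d\log f_{\alpha}^{j}+\zeta_{\alpha}$ with $(\zeta_{\alpha})$ obtained from the vanishing of $c_{1}(v)$ after refining the covering, and verify gluing via the identity $\sum_{j}\lambda_{j}(d\log f_{\alpha}^{j}-d\log f_{\beta}^{j})=\zeta_{\beta}-\zeta_{\alpha}$. Your write-up simply makes explicit the (trivial) local checks of meromorphicity and closedness that the paper leaves implicit.
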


Notice that the global closed meromorphic $1$-form $\xi=(\xi_{\alpha}%
)\in\check{H}^{0}(\mathcal{U},\mathcal{Z}_{\mathcal{M}}^{1})$ is unique up to
adding a global closed holomorphic $1$-form.

In the sequel we shall need the following technical result well known for $1$-forms.

\begin{lemma}
\label{invar.}Let $\omega$ be a holomorphic $p$-form on $U\subset
\mathbb{C}^{n}$ admitting the hypersurface $V:(f=0)$ as invariant set. Then
there exists a holomorphic $(p+1)$-form $\varpi$ such that%
\[
\omega\wedge d\log(f)=\varpi.
\]

\end{lemma}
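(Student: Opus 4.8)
The plan is to reduce the statement to a divisibility claim and then to verify it locally and glue. Since $d\log f=df/f$, the asserted identity $\omega\wedge d\log f=\varpi$ with $\varpi$ holomorphic is equivalent to the divisibility $f\mid\omega\wedge df$ inside $\Omega^{p+1}_{\mathcal{O}}(U)$. I may therefore assume $f$ is a \emph{reduced} defining function of $V$, and I first work on the Zariski-open set $U^{*}=U\setminus\bigl(\operatorname{Sing}(\omega)\cup\operatorname{Sing}(V)\bigr)$, whose complement has codimension $\geq 2$ in $U$ (recall $\operatorname{Sing}(\omega)$ has codimension $\geq2$ by hypothesis and $\operatorname{Sing}(V)$ has codimension $\geq 1$ in the hypersurface $V$, hence $\geq 2$ in $U$).

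On $U^{*}$ the form $\omega$ is a nonsingular LDS $p$-form, so near each point it factors as $\omega=\omega_{1}\wedge\cdots\wedge\omega_{p}$ with $\omega_{1},\dots,\omega_{p}$ pointwise independent holomorphic $1$-forms, and the plane field is $\operatorname{Ker}(\omega)=\bigcap_{i}\operatorname{Ker}(\omega_{i})$. It is precisely this local decomposability of the LDS form that drives the argument. Fix $q\in V\cap U^{*}$. By linear duality the invariance hypothesis $\operatorname{Ker}(\omega(q))\subset\operatorname{Ker}df(q)$ says that $df(q)$ annihilates $\bigcap_{i}\operatorname{Ker}(\omega_{i}(q))$, whence $df(q)\in\operatorname{span}\{\omega_{1}(q),\dots,\omega_{p}(q)\}$. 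Writing $df(q)=\sum_{i}a_{i}\,\omega_{i}(q)$ and wedging with $\omega(q)$, each summand of $\omega(q)\wedge df(q)$ repeats one of the factors $\omega_{i}(q)$, so $\omega(q)\wedge df(q)=0$. Thus the holomorphic $(p+1)$-form $\omega\wedge df$ vanishes identically along $V\cap U^{*}$.

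Consequently, in any local coordinate expression $\omega\wedge df=\sum_{|I|=p+1}c_{I}\,dx_{I}$ each coefficient $c_{I}$ vanishes on the dense subset $V\cap U^{*}$, hence on all of $V$ by continuity; since $f$ is reduced, the analytic Nullstellensatz gives $c_{I}=f\,\tilde c_{I}$ with $\tilde c_{I}$ holomorphic, so $\varpi:=\sum_{I}\tilde c_{I}\,dx_{I}$ satisfies $\omega\wedge df=f\,\varpi$, i.e. $\omega\wedge d\log f=\varpi$ on $U^{*}$. I then extend $\varpi$ across $\operatorname{Sing}(\omega)\cup\operatorname{Sing}(V)$: its coefficients are holomorphic and bounded off an analytic set of codimension $\geq 2$, hence extend holomorphically by the Riemann (Hartogs) extension theorem, and the identity persists by continuity. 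I expect the genuinely delicate points to be the two places where the clean linear algebra does not directly apply, namely the passage from pointwise vanishing of $\omega\wedge df$ on $V$ to honest divisibility by $f$ (which forces one to take $f$ reduced) and the removable-singularity step, which must legitimately cross both $\operatorname{Sing}(\omega)$ and $\operatorname{Sing}(V)$; the computation on the regular part is short and conceptual.
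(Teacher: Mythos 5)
Your proof is correct and follows essentially the same route as the paper's: both establish the pointwise vanishing $\omega(x)\wedge df(x)=0$ along $V$ from the invariance hypothesis (you by explicitly decomposing the LDS form on the regular part, the paper by citing de Medeiros' duality $\ker(\omega(x))^{\perp}=\mathcal{E}^{*}(\omega(x))$, which encodes the same linear algebra) and then conclude by the R\"uckert Nullstellensatz. Your extra care about the reducedness of $f$ and the extension across the codimension-$\geq 2$ singular loci merely fills in details the paper leaves implicit.
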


\begin{proof}
Since $\ker(\omega(x))\subset\ker(df(x))$ for all $x\in V$, then $df(x)\in
\ker(\omega(x))^{\bot}$ (cf. \cite{God1969}). From \cite{deMed}, Proposition
1.2.2, one has $df(x)\in\mathcal{E}^{\ast}(\omega(x))$ for all $x\in V$. In
other words, $\omega(x)\wedge df(x)=0$ $x\in V$. The result then follows from
R\"{u}ckert nullstellensatz (the analytic version of Hilbert's nullstellensatz).
\end{proof}

From Lemmas \ref{meromorphic form} and \ref{invar.} we obtain the linear map%
\[%
\begin{array}
[c]{cccc}%
\phi: & \mathfrak{Div}_{0}(M) & \longrightarrow & \check{H}^{0}(\mathcal{U}%
,\Omega_{\mathcal{O}}^{p}\otimes\mathcal{L})/\omega\wedge\check{H}%
^{0}(\mathcal{U},\mathcal{Z}_{\mathcal{O}}^{2})\\
& \left(  v=\sum_{j=1}^{n}\lambda_{j}v_{j}\right)   & \mapsto & [\omega
\wedge\xi]
\end{array}
\]
where $\mathfrak{Div}_{0}$ denotes the set of \thinspace flat divisors (i.e.,
those with vanishing first Chern class).

Since $\phi$ is linear, $\dim(\check{H}^{0}(\mathcal{U},\Omega_{\mathcal{O}%
}^{p}\otimes\mathcal{L}))$ is finite, and, by hypothesis, $\mathfrak{Div}%
_{0}(M)$ has infinite dimension, we may obtain $p$ linearly independent
divisors $(v_{i})$, $i=1,\cdots,p$, in the kernel of $\phi$. Thus we have a
closed meromorphic $1$-form $\xi$ such that $\xi\wedge\omega=0$. Repeating
this process, we may find linearly independent closed meromorphic $1$-forms
$\xi_{i_{1}},\cdots,\xi_{i_{p}}$ as above. Thus $\xi_{I}=\xi_{i_{1}}%
\wedge\cdots\wedge\xi_{i_{p}}$ is a closed meromorphic $p$-form. Let $\eta
_{I}=\xi_{i_{1}}\wedge\cdots\wedge\xi_{i_{p}}$, since it is decomposable, then
$\mathcal{E}^{\ast}(\eta_{I})=\mathcal{I}(\xi_{i_{1}},\cdots,\xi_{i_{p}})$,
thus $\mathcal{E}^{\ast}(\eta_{I}(x))\subset\mathcal{E}^{\ast}(\omega(x))$ for
all $x$ off the polar set of $\eta_{I}$, say $P(\eta_{I})$. From de Medeiros
division Lemma (\cite{deMed}, Lemma 3.1.1, p. 460) there are functions $h_{I}$
holomorphic off $P(\eta_{I})$ such that $\omega=$ $h_{I}\eta_{I}$. As a
consequence, there are global meromorphic functions $h_{IJ}$ such that
$\xi_{I}=h_{IJ}\xi_{J}$. Since $\xi_{I}$ is closed, then $0=dh_{IJ}\wedge
\xi_{J}$. In particular, $0=dh_{IJ}\wedge\omega$. Choosing the $I$ and $J$
properly (since $\mathfrak{Div}_{0}(M)$ has infinite dimension), we may find
the desired global meromorphic first integral $h:M\longrightarrow
\mathbb{C}^{p}$.

\section{Transversely holomorphic structures}

The notion of a smooth manifold endowed with a transversely holomorphic
structure was introduced in the work of X. Gomez-Mont (\cite{Gomez-Mont}).
Behind all the construction is the notion of structural sheaf. More precisely,
we suppose the existence of a smooth manifold $M$ admitting a real smooth
foliation $\mathcal{F}$ of real codimension $2p$ with holomorphic transversal
sections and transversely holomorphic transition maps. The \emph{structural
sheaf} $\mathcal{O}$ being the germ of functions constant along the leaves and
transversely holomorphic. The structural sheaf is also called the sheaf of
\emph{basic transversely holomorphic functions}. In a similar fashion one may
define the sheaf of \emph{basic transversely meromorphic functions
}$\mathcal{M}$, \emph{basic transversely holomorphic} $p$\emph{-forms}
$\Omega_{\mathcal{O}}^{p}$, the \emph{basic transversely meromorphic }%
$p$\emph{-forms} $\Omega_{\mathcal{M}}^{p}$ etc.

Therefore, one may construct structures similar to the ones present in the
classical holomorphic case like divisors and associate line bundles. Using as
key point the finiteness of $\dim_{\mathbb{C}}(\check{H}^{0}(\mathcal{U}%
,\Omega_{\mathcal{O}}^{p+1}\otimes\mathcal{L}))$ (cf. \cite{Gomez-Mont}),
Brunella and Nicolau (\cite{Brunella-Nicolau}) repeated essentially the same
arguing in (\cite{Ghys}) in order to prove the existence of non constant basic
transversely meromorphic functions for any transversely holomorphic structure
of complex codimension $1$ defined on a compact and connected manifold $M$
admitting infinitely many compact solutions.

Since Gomez-Mont theory is constructed for transversely holomorphic structures
of any complex codimension, it is natural to study the same problem for
greater codimensions. Again we need to introduce some notation. The first
remark is that we are dealing with foliations without singularities, thus a
transversely holomorphic foliation of complex codimension $p$ on a complex
manifolds $M$ is given by the following data:

\begin{enumerate}
\item An open covering $\mathcal{U}=\{U_{\alpha}\}$ of $M$;

\item A collection of non-singular, basic transversely holomorphic, locally
decomposable, and integrable $p$-forms $\omega_{\alpha}\in\Omega_{\mathcal{O}%
}^{p}(U_{\alpha})$;

\item A collection of maps $g_{\beta\alpha}\in\mathcal{O}^{\ast}%
(U_{\alpha\beta})$ such that $\omega_{\beta}=g_{\beta\alpha}\omega_{\alpha}$
whenever $U_{\alpha\beta}=U_{\alpha}\cap U_{\beta}\neq\emptyset$.
\end{enumerate}

Again the Cech cocycle $g=(g_{\beta\alpha})\in H^{1}(\mathcal{U}%
,\mathcal{O}^{\ast})$ defines a line bundle $\mathcal{L}$ over $M$. Thus the
collection $(\omega_{\alpha})$ may be interpreted as a global basic
transversely holomorphic $p$-form $\omega=(\omega_{\alpha})\in H^{0}%
(\mathcal{U},\Omega_{\mathcal{O}}^{p}\otimes\mathcal{L})$ defined over $M$ and
assuming values on the line bundle $\mathcal{L}$.

A transversely holomorphic hypersurface (a codimension $1$ variety) is given
by an open (trivializing) covering $\mathcal{U}=\{U_{\alpha}\}$ of $M$ and
complex functions $f_{\alpha}:U_{\alpha}\longrightarrow\mathbb{C}$ such that
$f_{\beta}=g_{\beta\alpha}f_{\alpha}$ for some $(g_{\beta\alpha})\in
H^{1}(\mathcal{U},\mathcal{O}^{\ast})$. This last condition means essentially
that $f_{\alpha}$ and $f_{\beta}$ vanish at the same place in $U_{\alpha}\cap
U_{\beta}$ with transversely holomorphic structure. Notice that $f=(f_{\alpha
})$ is not necessarily basic. A\ hypersurface $V:(f=0)$ is said to be
\emph{invariant} by $\omega$ if $\operatorname*{Ker}(\omega(p))\subset
\operatorname*{Ker}df(p)$ for all $p\in V$. This coincides with the usual
geometric definition in case $\omega$ is integrable, i.e., the tangent space
of the solutions are tangent to $V$.

Again a linear combination with complex coefficients $v=a_{1}v_{1}%
+\cdots+a_{n}v_{n}$ is a \emph{divisor} if each $a_{j}\in\mathbb{Z}$ and each
$v_{j}$ is a transversely holomorphic hypersurface representing the variety
$(f^{1})^{a_{1}}\cdots(f^{k})^{a_{k}}=0$. In a similar fashion, we obtain the
linear map $\psi:$ $\mathfrak{Div}(M)\otimes\mathbb{C}\longrightarrow\check
{H}^{1}(\mathcal{U},\Omega_{\mathcal{O}}^{1})$ given by $v=\sum_{j=1}%
^{n}\lambda_{j}v_{j}\mapsto\sum_{j=1}^{n}\lambda_{j}c_{1}(v_{j})=\sum
_{j=1}^{n}\lambda_{j}d\log g_{\beta\alpha}^{j}$, where $(g_{\beta\alpha}^{j})$
is the line bundle determined by the invariant hypersurface $v_{j}$.

Once again the linear structure of the map $\psi$ and the existence of
infinitely many invariant transversely holomorphic hypersurfaces ensure the
kernel of $\psi$ has sinfinite dimension, i.e., the $\mathbb{C}$-linear space
$\mathfrak{Div}_{0}(M)$ of \emph{flat} invariant divisors has infinite
dmension. From adapted versions of Lemmas \ref{meromorphic form} and
\ref{invar.} we obtain the following result.

\begin{theorem}
\label{trans. hol. fol.}Let $M$ be a smooth manifold endowed with a transverse
holomorphic structure of complex codimension $p$ given by $\omega\in
H^{0}(M,\Omega^{p}\otimes\mathcal{L})$. Then the $p$-codimension plane field
$(\mathcal{F}:\omega=0)$ admits a basic transversely meromorphic first
integral $F:M\longrightarrow\mathbb{C}^{p}$ iff $\mathcal{F}$ admits
infinitely many invariant transversely holomorphic hypersurfaces.
\end{theorem}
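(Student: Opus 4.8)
The plan is to mirror the proof of Theorem~\ref{hol. plane field 1}, transposing each analytic step into the transversely holomorphic category supplied by Gomez-Mont's structural sheaf. Indeed, the backward direction (a basic transversely meromorphic first integral $F=(F_1,\dots,F_p)$ forces infinitely many invariant hypersurfaces) is the easy one: the level hypersurfaces $(F_j=c)$ for varying $c\in\mathbb{C}$ furnish a one-parameter family of distinct invariant transversely holomorphic hypersurfaces, since $dF_j$ annihilates $\ker\omega$ by construction. So the substance lies entirely in the forward direction, and there I would simply run the argument of Section~1 verbatim with $\mathcal{O}$, $\mathcal{M}$, $\Omega^p_{\mathcal{O}}$, $\Omega^p_{\mathcal{M}}$ now denoting the corresponding \emph{basic transversely holomorphic} (resp.\ meromorphic) sheaves.

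Concretely, first I would invoke the stated fact that the infinitude of invariant transversely holomorphic hypersurfaces, together with the $\mathbb{C}$-linearity of $\psi:\mathfrak{Div}(M)\otimes\mathbb{C}\longrightarrow\check{H}^1(\mathcal{U},\Omega^1_{\mathcal{O}})$, forces $\ker\psi=\mathfrak{Div}_0(M)$ to be infinite-dimensional; this is asserted in the paragraph preceding the theorem. Next I would apply the adapted version of Lemma~\ref{meromorphic form} to produce, for each flat divisor $v=\sum_j\lambda_j v_j$, a global closed basic transversely meromorphic $1$-form $\xi$, and the adapted version of Lemma~\ref{invar.} to guarantee that invariance yields a holomorphic $(p+1)$-form $\varpi$ with $\omega\wedge d\log f=\varpi$. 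Together these define the linear map
\[
\phi:\mathfrak{Div}_0(M)\longrightarrow \check{H}^0(\mathcal{U},\Omega^{p+1}_{\mathcal{O}}\otimes\mathcal{L})/\bigl(\omega\wedge\check{H}^0(\mathcal{U},\mathcal{Z}^2_{\mathcal{O}})\bigr),\qquad v\mapsto[\omega\wedge\xi].
\]
Because the target is finite-dimensional -- this is exactly the finiteness of $\dim_{\mathbb{C}}\check{H}^0(\mathcal{U},\Omega^{p+1}_{\mathcal{O}}\otimes\mathcal{L})$ cited from \cite{Gomez-Mont} -- while $\mathfrak{Div}_0(M)$ is infinite-dimensional, the kernel of $\phi$ is infinite-dimensional. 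Hence I can extract $p$ linearly independent flat divisors yielding closed basic transversely meromorphic $1$-forms $\xi_{i_1},\dots,\xi_{i_p}$ with each $\xi_{i_k}\wedge\omega=0$, form the decomposable closed $p$-form $\xi_I=\xi_{i_1}\wedge\cdots\wedge\xi_{i_p}$, apply de Medeiros' division Lemma to write $\omega=h_I\eta_I$, and conclude by comparing two such decompositions that there exist meromorphic functions $h_{IJ}$ with $dh_{IJ}\wedge\omega=0$, assembling the desired first integral $F:M\longrightarrow\mathbb{C}^p$.

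The step I expect to be the genuine obstacle -- and which I would take most care with -- is verifying that the two structural lemmas survive the passage to the transverse setting. Lemma~\ref{invar.} was proved using R\"uckert's Nullstellensatz on an honest complex domain $U\subset\mathbb{C}^n$; in the transversely holomorphic world the relevant objects are germs along the leaf space, so I must check that the pointwise inclusion $\ker\omega(x)\subset\ker df(x)$ still propagates to an identity $\omega\wedge df=0$ on the invariant set and that the division producing $\varpi$ can be performed \emph{basically}, i.e.\ that the resulting $(p+1)$-form is again basic transversely holomorphic rather than merely transversely holomorphic. Similarly, the closedness and global well-definedness of $\xi$ in Lemma~\ref{meromorphic form} rest on $c_1(v)$ vanishing in $\check{H}^1(\mathcal{U},\Omega^1_{\mathcal{O}})$ and on the $d\log f^j_\alpha$ being basic; I would confirm that $d\log$ of a transition function of a transversely holomorphic line bundle is indeed a section of the basic closed $1$-forms. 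Once these two adaptations are in place, the remainder of the argument is purely the linear-algebraic and cohomological machinery already carried out in Section~1, and the conclusion follows by the identical reasoning.
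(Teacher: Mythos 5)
Your proposal follows essentially the same route as the paper, which itself only remarks that the proof is the same as in the holomorphic case (indeed simpler, since there are no singularities and the smooth version of de Medeiros' division lemma is almost immediate); you correctly identify the two structural lemmas as the points requiring adaptation, and your use of the finiteness of $\dim_{\mathbb{C}}\check{H}^{0}(\mathcal{U},\Omega_{\mathcal{O}}^{p+1}\otimes\mathcal{L})$ from Gomez-Mont matches the paper's stated key point. In fact your write-up is more detailed than the paper's one-paragraph justification, and your target $\Omega_{\mathcal{O}}^{p+1}\otimes\mathcal{L}$ for $\phi$ is the degree-consistent form of what the paper writes.
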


The proof is essentially the same as the one for the holomorphic case. In fact
it is even simpler due to the absence of singularities. For the smooth version
of de Medeiros division Lemma is almost immediate.

\end{document}